
\def\marker{\>\hbox{${\vcenter{\vbox{
    \hrule height 0.4pt\hbox{\vrule width 0.4pt height 6pt
    \kern6pt\vrule width 0.4pt}\hrule height 0.4pt}}}$}\>}

\documentclass[12pt]{article}
\usepackage{amsmath,amssymb,amsthm}
\usepackage{graphicx}
\usepackage{subfigure}
\usepackage{psfrag}
\usepackage{color}
\usepackage{enumerate}
\usepackage{epstopdf}
\usepackage{indentfirst}
\textwidth165mm
\textheight220mm
\voffset=-.6in
\hoffset=-.7in
\newtheorem{thm}{Theorem}[section]

\newtheorem{cor}[thm]{Corollary}
\newtheorem{lem}[thm]{Lemma}
\newtheorem{prop}[thm]{Proposition}
\theoremstyle{definition}
\newtheorem{defn}[thm]{Definition}
\linespread{1.5}

\def\dfn#1{{\it #1}}

\def\qed{ \hfill $\square$}

\title{A note on rainbow saturation number of paths}

\author{Shujuan Cao \footnotemark[1] ,\quad
Yuede Ma \footnotemark[2] \quad and \quad
Zhenyu Taoqiu\footnotemark[3]\quad
}
\date{\today}

\begin{document}
\maketitle
\begin{abstract}
For a fixed graph $F$ and an integer $t$, the \dfn{rainbow saturation number} of $F$, denoted by $sat_t(n,\mathfrak{R}(F))$, is defined as the minimum number of edges in a $t$-edge-colored graph on $n$ vertices which does not contain a
\dfn{rainbow copy} of $F$, i.e., a copy of $F$ all of whose edges receive a different color, but the addition of any missing edge in any color from $[t]$ creates such a rainbow copy.
Barrus, Ferrara, Vardenbussche and Wenger prove that $sat_t(n,\mathfrak{R}(P_\ell))\ge n-1$ for $\ell\ge 4$ and $sat_t(n,\mathfrak{R}(P_\ell))\le \lceil \frac{n}{\ell-1} \rceil \cdot \binom{\ell-1}{2}$ for $t\ge \binom{\ell-1}{2}$, where $P_\ell$ is a path with $\ell$ edges. In this short note, we improve the upper bounds and show that $sat_t(n,\mathfrak{R}(P_\ell))\le \lceil \frac{n}{\ell} \rceil \cdot \left({{\ell-2}\choose {2}}+4\right)$ for $\ell\ge 5$ and $t\ge 2\ell-5$.\\

\noindent\textbf{Keywords:} rainbow saturation number, edge-coloring, path\\
\end{abstract}

\renewcommand{\thefootnote}{\fnsymbol{footnote}}
\footnotetext[1]{
School of Mathematical Sciences, Tiangong University, Tianjin 300387, China;
sj.cao@163.com. }
\footnotetext[2]{
School of Science, Xi¡¯an Technological University, Xi¡¯an, Shaanxi 710021, PR China;
mayuede0000@163.com. }
\footnotetext[3]{
Center for Combinatorics and LPMC,
Nankai University, Tianjin 300071, China;
tochy@mail.nankai.edu.cn (the corresponding author). }

\section{Introduction}

Throughout this note, all graphs are simple, undirected, and finite. Throughout we use
the terminology and notation of \cite{W}. For a positive integer $t$, let $[t]$ denote the set $\{1,\ldots, t \}$. A \dfn{$t$-edge-coloring} of a graph $G$ is a function $f: E(G)\rightarrow [t]$, and a graph equipped with such a coloring is a ($t$-edge-colored graph).\par

A graph $G$ is called \dfn{$F$-saturated} if it is a maximal $F$-free graph. The classical saturation problem, first studied by Zykov~\cite{Z} and Erd{\"o}s, Hajnal and Moon~\cite{EH}, asks for the minimum number of edges in an $F$-saturated graph. For more results on saturation numbers, the reader should consult the excellent survey of Faudree, Faudree, and Schmitt \cite{FFS2011}. A rainbow analog of this problem was recently introduced by Barrus, Ferrara, Vardenbussche and Wenger~\cite{BF}, where a $t$-edge-colored graph is defined to be \dfn{rainbow $F$-saturated} if it contains no rainbow copy of $F$, i.e., a copy of $F$ all of whose edges receive a different color, but the addition of any missing edge in any color creates such a rainbow copy, denoted by \dfn{$(\mathfrak{R}(F),t)$-saturated}. This minimum size of a $t$-edge-colored rainbow $F$-saturated graph, denoted by $sat_t(n,\mathfrak{R}(F))$, is the \dfn{rainbow saturation number} of $F$, i.e., $$sat_t(n,\mathfrak{R}(F))=\min \{|E(G)| \colon |V(G)|=n, G\ \text{is $(\mathfrak{R}(F),t)$-saturated}\}.$$\par

In \cite{BF}, the authors proved some results on $sat_t(n,\mathfrak{R}(F))$ of various families of graphs including complete graphs, trees and cycles.  Kor{\'a}ndi \cite{DK} prove a conjecture of Barrus et al. on the rainbow saturation number of complete graphs. For more results on this topic we refer to \cite{FJ,GI,AG}. Especially, Barrus et al. \cite{BF} proved the following results on paths.
\begin{thm}[\cite{BF}]{\label{I1}}
(i). $sat_t(n,\mathfrak{R}(P_\ell))\ge n-1, \ \ell\ge 4$.
(ii). $sat_t(n,\mathfrak{R}(P_4))=n-1, \ t\ge 8$.
(iii).
$sat_t(n,\mathfrak{R}(P_\ell))\le \lceil \frac{n}{\ell-1} \rceil \cdot \binom{\ell-1}{2}, \ t\ge \binom{\ell-1}{2}$.
\end{thm}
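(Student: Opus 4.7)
The theorem combines a universal lower bound for $\ell\geq 4$, an exact determination for $\ell=4$, and an explicit upper bound construction for general $\ell$. I would address the three parts separately: part~(i) by a structural argument on the components of a saturated graph, and parts~(ii) and~(iii) by explicit constructions paired with carefully designed edge colorings. The general plan is to prove (i) first, then notice that (ii) needs only a matching upper bound, and finally handle (iii) via disjoint cliques.

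For part~(i), my approach is to show that any $(\mathfrak{R}(P_\ell),t)$-saturated graph $G$ on $n$ vertices with $\ell\geq 4$ satisfies $|E(G)|\geq n-1$ by analyzing its connected components. The first observation is that $G$ admits at most one isolated vertex: if two isolated vertices $u,v$ existed, then $G+uv$ would have $u$ and $v$ in a component consisting of a single edge, so no $P_\ell$ through $uv$ is possible when $\ell\geq 2$, contradicting saturation. Next, since $|E(G)|=\sum_C|E(C)|\geq\sum_C(|V(C)|-1)=n-k$ where $k$ counts the components, I would push to bound $k$ and the deficit of cycles. Picking vertices $u,v$ in two different components $C_u,C_v$, the saturation requirement forces, for every color $c\in[t]$, a rainbow $P_\ell$ through $uv$ in $G+uv$ that splits as two rainbow paths in $C_u$ and $C_v$ whose lengths sum to $\ell-1$. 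Exploiting this for various choices of $u,v$ and $c$, I would argue that either $G$ is connected (giving $|E|\geq n-1$ directly) or $G$ has exactly two components, one being an isolated vertex and the other containing a cycle, again yielding $|E|\geq n-1$.

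For part~(ii), the lower bound follows from (i), so I only need to exhibit a rainbow $P_4$-saturated graph with exactly $n-1$ edges when $t\geq 8$. My candidate is a spanning tree $T$ on $n$ vertices of diameter~$3$, for example a caterpillar or a double star: such a tree contains no $P_4$ at all and hence no rainbow $P_4$. I would then design an edge coloring $f\colon E(T)\to[t]$ so that, for every non-edge $e$ and every color $c\in[t]$, the graph $T+e$ with $e$ colored $c$ contains a rainbow $P_4$ through~$e$. The hypothesis $t\geq 8$ gives enough distinct colors to guarantee that three tree edges with three distinct colors, all different from $c$, can always be chosen adjacent to~$e$ in the required configuration. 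I would verify saturation by cases on the location of~$e$: leaf-to-leaf on the same side, leaf-to-leaf across sides, or leaf-to-the-non-adjacent-center.

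For part~(iii), I would use the disjoint clique construction: partition $V(G)$ into $\lceil n/(\ell-1)\rceil$ blocks of size at most $\ell-1$ and make each block a copy of $K_{\ell-1}$, yielding at most $\lceil n/(\ell-1)\rceil\binom{\ell-1}{2}$ edges. Since each component has at most $\ell-1$ vertices, no $P_\ell$ exists, ruling out a rainbow $P_\ell$. Color each $K_{\ell-1}$ with all $\binom{\ell-1}{2}$ distinct colors (possible because $t\geq\binom{\ell-1}{2}$), so that every block contains a rainbow Hamilton path between any chosen ordered pair of vertices. For any missing edge $uv$ (necessarily between two blocks $B_1,B_2$), a rainbow $P_\ell$ through $uv$ is produced by gluing a rainbow path of appropriate length in $B_1$ ending at~$u$, the new edge $uv$, and a rainbow path in $B_2$ starting at~$v$, choosing the endpoints so that the three color sets are pairwise disjoint. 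The main obstacle will be part~(i): ruling out all sparse saturated configurations requires a careful exhaustion of component structures combined with repeated exploitation of the cross-component rainbow saturation condition, whereas the constructions for (ii) and (iii) are relatively direct once the right base graph and coloring are identified.
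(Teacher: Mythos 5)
Theorem~\ref{I1} is quoted by this paper from \cite{BF}; the paper contains no proof of it, so there is no internal proof to compare against, and your proposal must be judged on its own merits. Judged that way, parts (i) and (ii) have genuine flaws. For (i), your plan rests on the dichotomy that a saturated graph ``is connected, or has exactly two components, one an isolated vertex and the other containing a cycle.'' That dichotomy is false: the graphs constructed in this very paper, $G^*=kH$ of Definition~\ref{Cd1}, and the disjoint unions of rainbow copies of $K_{\ell-1}$ used in \cite{BF}, are $(\mathfrak{R}(P_\ell),t)$-saturated yet have arbitrarily many components, none of them an isolated vertex. Since $|E(G)|\ge n-k$ where $k$ is the number of components, the statement you actually need (and which is \emph{equivalent} to the bound $n-1$) is that at most one component of $G$ is acyclic; every other component then contributes at least $|V(C)|$ edges. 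Your cross-component path-splitting observation is the right tool for that lemma --- for instance, joining two leaves of two tree components by an edge colored with the color of one leaf's pendant edge forces the entire rainbow $P_\ell$ minus one edge into the other component --- but as written you would be trying to prove a false structural statement, so the argument cannot close.

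For (ii), your candidate construction fails outright: a tree of diameter $3$ is a double star, and a double star is not rainbow $P_4$-saturated under \emph{any} coloring, however large $t$ is. If $u_1,u_2$ are two leaves attached to the same center $x_1$, then every path with four edges through the added edge $u_1u_2$ must leave $\{u_1,u_2\}$ via $x_1$ and continue through the central edge $x_1x_2$; hence coloring $u_1u_2$ with $c(x_1x_2)$ creates no rainbow $P_4$, violating saturation. (Recall $P_\ell$ here has $\ell$ edges, so $P_4$ has five vertices.) The correct extremal graph must be richer; note that it is allowed to contain non-rainbow copies of $P_4$, a freedom your diameter-$3$ choice gives up. Part (iii) is essentially the construction of \cite{BF} (as this paper itself remarks), and your outline is sound in spirit, but two points still need proof: with only $t\ge\binom{\ell-1}{2}$ colors the cliques may all use the \emph{same} color set, so the claimed ``pairwise disjoint'' gluing of a path in $B_1$, the new edge, and a path in $B_2$ requires an actual argument exploiting the freedom of choosing rainbow paths inside a rainbow clique; and the leftover block when $(\ell-1)\nmid n$ cannot be arbitrarily small, since an edge added into a very small block (in the extreme case, an isolated vertex) cannot be extended to a path of length $\ell$ at all.
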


In the proof of Theorem \ref{I1}(iii), Barrus et al. use rainbow $K_{\ell-1}$ as construction components. Here we improve the upper bounds by changing the construction components, which yields Theorem \ref{thm1}.
\begin{thm}\label{thm1}
For $\ell\ge 5$ and $t\ge 2\ell-5$, $sat_t(n,\mathfrak{R}(P_\ell))\le \lceil \frac{n}{\ell} \rceil \cdot \left(\binom{\ell-2}{2}+4\right)$.
\end{thm}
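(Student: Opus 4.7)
Plan. I aim to give an explicit rainbow $P_\ell$-saturated graph achieving the claimed bound. Let $k=\lceil n/\ell\rceil$ and, padding with isolated vertices if needed, partition the vertex set into blocks $B_1,\dots,B_k$ of size $\ell$, each with ``core'' vertices $v_1^i,\dots,v_{\ell-2}^i$ carrying a copy of $K_{\ell-2}$ and two ``satellite'' vertices $a^i,b^i$. In each block I put four satellite edges: two within-block edges $a^iv_1^i,\, b^iv_2^i$ and two cross-block edges $a^iv_1^{i+1},\, b^iv_2^{i+1}$ (indices mod $k$). Each block contributes $\binom{\ell-2}{2}+4$ edges, yielding $|E(G)|=k\bigl(\binom{\ell-2}{2}+4\bigr)$. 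One checks that each block has exactly $2\ell-5$ within-block non-edges, namely $a^ib^i$, the $\ell-3$ pairs $a^iv_j^i$ with $j\ne 1$, and the $\ell-3$ pairs $b^iv_j^i$ with $j\ne 2$; this count is exactly the threshold $t\ge 2\ell-5$ in the hypothesis.

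Coloring and condition (a), no rainbow $P_\ell$. I would color each $K_{\ell-2}$ rainbow with a common palette of $\binom{\ell-2}{2}$ colors reused across blocks, and choose the satellite colors (possibly overlapping with the core palette) so that at each satellite vertex the two incident satellite edges share one color. Since $P_\ell$ has $\ell+1>\ell$ vertices, every $P_\ell$ in $G$ spans at least two blocks and hence passes through at least one satellite vertex $a^i$ or $b^i$ as an interior vertex. Being an interior vertex of degree two in $G$, it forces both of its same-colored incident satellite edges into the path, producing a color repetition and ruling out any rainbow $P_\ell$.

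Condition (b), saturation, and main obstacle. For each missing edge $e$ and each color $c\in[t]$ I would exhibit a rainbow $P_\ell$ through $e$ colored $c$. Between-block non-edges are handled by concatenating a suitably chosen path through each endpoint's $K_{\ell-2}$ via $e$, choosing the two sub-paths so their $K$-colors are disjoint from each other and from $c$. For within-block non-edges (the $2\ell-5$ listed above) the added edge gives a satellite a third incident edge or creates a ``shortcut'' in the core, allowing a rainbow $P_\ell$ to enter the block through one cross-block satellite, traverse the core (using the new edge), and exit through the opposite satellite into the neighbouring block along the cyclic chain. The main difficulty is precisely the within-block case in which $c$ coincides with the common satellite color at an endpoint of $e$: the natural rainbow route through that satellite is then blocked by a color clash, and one must re-route via the opposite satellite or via the other direction of the cyclic chain. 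Verifying that the coloring leaves enough slack to simultaneously handle all $(e,c)$ pairs is the heart of the argument, and it is this matching between the $2\ell-5$ within-block non-edges per block and the threshold $t\ge 2\ell-5$ that makes the construction feasible.
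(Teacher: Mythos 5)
Your proposal is a plan rather than a proof, and the part you defer --- ``verifying that the coloring leaves enough slack to simultaneously handle all $(e,c)$ pairs'' --- is precisely where all the content of this theorem lives. You never specify the coloring of the cores or the satellite edges concretely, so neither condition (a) nor condition (b) is actually verified. In the paper this is done by fixing the explicit coloring $c(v_iv_j)=i+j$ on $K_{\ell-2}$ and proving (Lemma~\ref{Ct2}) that for each relevant vertex $v$ the intersection of the color sets over \emph{all} rainbow Hamilton paths of the core gadget starting at $v$ is empty --- a parity analysis that occupies most of the paper and is the real obstacle. Without an analogous statement for your coloring, saturation is unestablished. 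Two further concrete problems: (i) your ``no rainbow $P_\ell$'' argument rests on $P_\ell$ having $\ell+1$ vertices, but the convention actually used throughout the paper's proofs (and needed for the bound to be nontrivial) is that $P_\ell$ has $\ell$ vertices; under that convention a single block of size $\ell$ admits a Hamilton path $a^i,v_1^i,\dots,v_2^i,b^i$ with both satellites as \emph{endpoints}, and your stipulation that the two edges at each satellite share a color does nothing to stop this path from being rainbow. (ii) ``Padding with isolated vertices'' destroys saturation: if $n\equiv 2\pmod{\ell}$ you have two isolated vertices, and adding the edge between them in any color cannot create any $P_\ell$ with $\ell\ge 3$; the paper instead places the leftover $a$ vertices on a rainbow $K_a$ (Section~3). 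Finally, the numerical coincidence you highlight between the $2\ell-5$ within-block non-edges and the threshold $t\ge 2\ell-5$ is not what drives the hypothesis; in the paper $2\ell-5$ is the number of colors in the proper edge-coloring of the gadget $H$ (the maximum of $i+j$ is $(\ell-3)+(\ell-2)=2\ell-5$).

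For comparison, the paper's construction is simpler than yours: it takes \emph{disjoint} copies of a single $\ell$-vertex gadget $H$ (a colored $K_{\ell-2}$ plus two pendant-like vertices each joined to the same pair $v_{\ell-3},v_{\ell-4}$ with a matched pair of repeated colors), with no cross-block edges at all. Your cyclic linking adds many more non-edge types to check without reducing the edge count, so even if completed it would be strictly more work.
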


In Section 2, for all positive integers $\ell\ge 5$, we provide infinitely many $t$-edge-colored graphs with $sat_t(n,\mathfrak{R}(P_\ell))$ attaining the upper bounds. In Section 3, the proof of the main result is presented.

\section{Construction}

In this section, we construct several rainbow $P_\ell$-saturated graphs achieving the upper bounds in Theorem \ref{thm1}, for all positive integers $\ell\ge 5$.

\begin{defn}\label{Cd1}
For any positive integer $\ell\ge 5$, let $V(K_{\ell-2})=\{v_0, v_1, \ldots, v_{\ell-3} \}$ and $c(v_i v_j)=i+j$ as the edge coloring rule of $K_{\ell-2}$.
Let $H$ be the graph obtained from $K_{\ell-2}$ by adding two vertices $\{v_{\ell-2},x\}$ and four edges $v_{\ell-2}v_{\ell-3},v_{\ell-2}v_{\ell-4},x v_{\ell-3},xv_{\ell-4}$. Set $c(v_{\ell-2}v_{\ell-3})=c(xv_{\ell-4})=2\ell-5$ amd $c(v_{\ell-2}v_{\ell-4})=c(xv_{\ell-3})=2\ell-6$. Let $G^*=kH$ for all positive integers $k\ge 2$.
\end{defn}

In the following we will study some properties of $H$ and $G^*$.

\begin{lem}\label{Ct1}
The graph $H$ is $t$-proper-edge-colored, and $H$ does not contain any rainbow copy of $P_\ell$, where $t=2\ell-5$ and $\ell\ge 5$.
\end{lem}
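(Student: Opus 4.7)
My plan is to verify both claims by direct bookkeeping: a color computation at every vertex will establish properness, and a vertex count will rule out any copy of $P_\ell$.

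First I would check properness. On $K_{\ell-2}$ the rule $c(v_iv_j)=i+j$ takes values in $\{1,2,\ldots,2\ell-7\}$, and at each vertex $v_i$ the incident colors are $\{i+j:j\in\{0,\ldots,\ell-3\}\setminus\{i\}\}$, which are pairwise distinct because the map $j\mapsto i+j$ is injective. So the $K_{\ell-2}$-part of $H$ is already properly edge-colored. Since the two auxiliary colors $2\ell-6$ and $2\ell-5$ strictly exceed every color appearing in $K_{\ell-2}$, the only remaining possibility for a conflict is among the four newly added edges. At each of $v_{\ell-3}$ and $v_{\ell-4}$ exactly two new edges attach, and by definition they carry distinct colors (one receives $2\ell-5$ and the other $2\ell-6$); the degree-$2$ vertices $v_{\ell-2}$ and $x$ are incident only to these two auxiliary edges, again with the two distinct colors $2\ell-5, 2\ell-6$. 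This would show $H$ is properly edge-colored and uses exactly $t=2\ell-5$ colors.

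Second, for the rainbow $P_\ell$-freeness I would simply count vertices. By the convention fixed in the abstract, $P_\ell$ has $\ell$ edges and therefore $\ell+1$ vertices, while $|V(H)|=|V(K_{\ell-2})|+|\{v_{\ell-2},x\}|=(\ell-2)+2=\ell$. Hence $H$ contains no copy of $P_\ell$ as a subgraph, rainbow or otherwise.

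The argument is essentially a direct verification, so I do not expect a real obstacle. The only point that requires any care is confirming that the auxiliary colors $2\ell-6, 2\ell-5$ do not coincide with any $K_{\ell-2}$-color at the boundary vertices $v_{\ell-3}$ and $v_{\ell-4}$, but this is immediate because the colors on $K_{\ell-2}$ never exceed $2\ell-7$.
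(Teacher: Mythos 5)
Your first claim (properness) is verified correctly and in essentially the same way the paper intends, just with more detail: the colors inside $K_{\ell-2}$ lie in $\{1,\dots,2\ell-7\}$ and are distinct at each vertex, and the two auxiliary colors $2\ell-6,\,2\ell-5$ exceed all of them, so no conflict can arise. That half is fine.

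The second half has a genuine gap. Your vertex count rests entirely on reading ``$P_\ell$ is a path with $\ell$ edges'' from the abstract, but that sentence is inconsistent with the convention the paper actually uses everywhere else: in Lemma~\ref{Ct2} a rainbow Hamiltonian path of the $(\ell-1)$-vertex graph $H^*=H\setminus x$ plus one added edge is claimed to be a rainbow $P_\ell$ (that is a path on $\ell$ vertices, i.e.\ $\ell-1$ edges), and Proposition~\ref{pro1} explicitly exhibits ``a rainbow $P_5=\{v_2^1,v_3^1,v_1^1,v_1^2,v_2^2\}$'', a path on five vertices. Under the operative convention $|V(P_\ell)|=\ell=|V(H)|$, so $H$ can perfectly well contain a copy of $P_\ell$ as a subgraph --- namely a Hamiltonian path --- and counting vertices proves nothing. (If your reading were the right one, the lemma would indeed be trivial, but then the saturation claims in Section~2 would all fail, so it cannot be what is meant.)

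What is actually needed, and what the paper's proof supplies in compressed form, is the following: any copy of $P_\ell$ in $H$ is a Hamiltonian path and hence contains both $x$ and $v_{\ell-2}$. These two vertices have degree $2$ with the common neighborhood $\{v_{\ell-3},v_{\ell-4}\}$, and their four incident edges carry only the two colors $2\ell-5$ and $2\ell-6$, each appearing twice. A Hamiltonian path must use at least one edge at each of $x$ and $v_{\ell-2}$; if either is an internal vertex it uses both of its edges and hence both colors, forcing a clash with any edge used at the other vertex, and if both are endpoints a short case check shows the two used edges either share a color or both attach to the same vertex of $\{v_{\ell-3},v_{\ell-4}\}$, in which case the path would have only three vertices. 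Either way no Hamiltonian path of $H$ is rainbow. You should replace your counting argument with this (or an equivalent) color-repetition argument.
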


\begin{proof}
By the edge coloring rule of $H$, for any $v\in H$, all edges incident to $v$ have different colors. Thus $H$ is $t$-proper-edge-coloring, and $t=\ell-2+\ell-3=2\ell-5$.
Suppose $H$ contains a rainbow copy of $P_\ell$, denoted by $P_\ell'$. For $|H|=\ell$, we have $x,v_{\ell-2}\in P_\ell'$. Then there are at least two same colors in $c(P_\ell')$.
\end{proof}

For a graph $G$ and $v\in G$, let $P_v$ be a rainbow Hamilton path from $v$ of $G$ and $c(P_v)$ be the color set of $E(P_v)$. Let $G_v$ be the set of all such $P_v$ and $c(G_v)=\bigcap c(P_v)$.

\begin{lem}\label{Ct2}
For any positive integer $\ell\ge 5$, let $H^*=H\setminus x$ and $e\in E(\overline{G^*})$. Then $H^*+e$ contains a rainbow $P_\ell$ such that $c(e)$ is any color from $[t]$ and $e$ is incident to $v_i\in H^*$, where $i\notin \{\ell-4,\ell-3 \}$.
\end{lem}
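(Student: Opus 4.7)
The plan is to reduce the claim to finding a rainbow Hamilton path of $H^*$ with prescribed starting vertex and forbidden edge color. Since $|V(H^*+e)|=\ell=|V(P_\ell)|$ and the outside endpoint $u$ of $e$ has degree exactly $1$ in $H^*+e$, any rainbow $P_\ell$ of $H^*+e$ must be a Hamilton path having $u$ as an endpoint and $e$ as the unique edge at $u$. Hence it suffices to produce a rainbow Hamilton path of $H^*$ starting at $v_i$ whose $\ell-2$ edge-colors are distinct and all different from $c(e)$.

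Next I would exploit the structural fact that $v_{\ell-2}$ has only the two neighbours $v_{\ell-3}, v_{\ell-4}$ in $H^*$, joined by the two bridge edges of distinct colors $2\ell-5$ and $2\ell-6$. Because $v_i\notin\{v_{\ell-3},v_{\ell-4}\}$, the vertex $v_{\ell-2}$ appears in any such Hamilton path either (A) as the other endpoint, reached by a single bridge edge, or (B) internally between $v_{\ell-3}$ and $v_{\ell-4}$ using both bridge edges. If $c(e)\in\{2\ell-5,2\ell-6\}$ we are forced into case (A) via the admissible bridge; otherwise both options remain available. In case (A) the problem reduces to finding a rainbow Hamilton path of $K_{\ell-2}$ from $v_i$ to either $v_{\ell-3}$ or $v_{\ell-4}$ that avoids $c(e)$; in case (B) to two vertex-disjoint paths covering $V(K_{\ell-2})$, one ending at $v_{\ell-3}$ and the other at $v_{\ell-4}$ (with $v_i$ a starting endpoint of one of them), whose combined colors together with $\{2\ell-5,2\ell-6\}$ form a rainbow set missing $c(e)$. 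Inside $K_{\ell-2}$, whose coloring $c(v_av_b)=a+b$ uses $2\ell-7$ distinct colors, the monotone traversal $v_0,v_1,\ldots,v_{\ell-3}$ with consecutive odd colors $1,3,\ldots,2\ell-7$ is a natural rainbow template, and reversals and local transpositions of two indices suffice to produce the rearrangements needed to hit the prescribed endpoints while avoiding any single forbidden color.

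The main obstacle is to handle those parameters where case (A) is blocked on both admissible bridges. Already for $\ell=5$, $i=0$, $c(e)=3$, every rainbow Hamilton path in $K_3$ between $v_0$ and either of $v_1,v_2$ must use color $3$, so one is forced into the case (B) construction $v_0,v_1,v_3,v_2$ with bridge colors $4,5$. The lemma therefore reduces to a finite case analysis on the parity of $i$ and on the position of $c(e)$ within the color range, verifying that whenever case (A) fails, case (B) succeeds (and conversely). Since $K_{\ell-2}$ provides $2\ell-7$ colors against the $\ell-3$ used by any Hamilton path, there is slack of order $\ell$ colors, which is ample for the case analysis to close; the bulk of the proof's technical content lies in making this bookkeeping explicit.
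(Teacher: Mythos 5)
Your reduction is exactly the paper's: since the outside endpoint of $e$ has degree one in $H^*+e$, a rainbow $P_\ell$ there is the same thing as a rainbow Hamilton path of $H^*$ starting at $v_i$ whose colors avoid $c(e)$; the paper phrases this as showing that $c(H^*_{v_i})=\bigcap_P c(P)$ is empty for the admissible $i$. Your building blocks are also the paper's: the monotone path $v_0,v_1,\ldots,v_{\ell-3}$ with the odd colors $1,3,\ldots,2\ell-7$, local transpositions of it, and the two ways of routing $v_{\ell-2}$ (as a far endpoint through one bridge edge, or internally between $v_{\ell-3}$ and $v_{\ell-4}$ through both).

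The gap is that what you defer as ``bookkeeping'' is the entirety of the paper's proof, and the reason you offer for believing it closes is not sound. The slack count ($2\ell-7$ colors available in $K_{\ell-2}$ versus $\ell-3$ edges used) cannot by itself exclude the possibility that some single color lies on \emph{every} rainbow Hamilton path from $v_i$: exactly the same slack is present at $v_{\ell-4}$ and $v_{\ell-3}$, where the paper computes $c(H^*_{v_{\ell-4}})=\{2\ell-5\}$ and $c(H^*_{v_{\ell-3}})=\{2\ell-6\}$, which is precisely why those two indices are excluded from the statement. So the forcing at $v_{\ell-2}$ can pin a color despite the slack, and a case-by-case verification is unavoidable; the paper carries it out (for $\ell$ odd) in six subcases indexed by the position and parity of $i$, in each exhibiting two to four explicit paths, one all-odd, one all-even up to a short exceptional list, and targeted transpositions killing the leftover colors, so that the intersection of their color sets is empty. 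Your plan names the right templates but provides no mechanism guaranteeing that for every pair $(i,c(e))$ one of your (A)/(B) constructions succeeds. Two smaller points: your dichotomy omits the admissible index $i=\ell-2$, where $v_{\ell-2}$ is the \emph{starting} endpoint rather than ``the other endpoint'' or an internal vertex, and that subcase needs its own (easy) treatment; and the even-$\ell$ indexing, which the paper itself dismisses as ``similar,'' would still have to be written out.
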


\begin{proof}
We only need to prove $c(H^*_v)=\emptyset$ for any $v\in H^* \setminus \{v_{\ell-4},v_{\ell-3} \}$.\par
Suppose $\ell$ is odd. Assume $a\in N^+$, then $V(H^*)=\{v_0,v_1,\ldots,v_{\ell-4},v_{\ell-3},v_{\ell-2} \}$. By Definition \ref{Cd1}, $t=2\ell-5$ and $v_{\ell-2}$ is only adjacent to $v_{\ell-3}, \ v_{\ell-4}$.

If $v=v_0$, for $H^*$, then let $P_{v_0}^1={v_0,v_1,v_3,\ldots, v_{2a+1},\ldots, v_{\ell-4},v_{\ell-2},v_{\ell-3},\ldots, v_{2a},\ldots, v_4,v_2}$ and $P_{v_0}^2=\{v_0,v_1,v_2,v_3,\ldots, v_{2a},v_{2a+1},\ldots, v_{\ell-4},v_{\ell-3},v_{\ell-2} \}$. Then all elements in $c(P_{v_0}^1)$ are even except for $\{1,2\ell-5 \}$ and all elements in $c(P_{v_0}^2)$ are odd. So we have $c(H^*_{v_0})\subseteq \{1,2\ell-5\}$. Change the positions of a few vertices in $P_{v_0}^2$, we can get $P_{v_0}^{21}=\{v_0,v_2,v_1,v_3,\ldots, v_{\ell-5},v_{\ell-3},$ $v_{\ell-4},v_{\ell-2} \}$. There is no $\{1,2\ell-5 \}$ in $c(P_{v_0}^{21})$. Thus $c(H^*_{v_0})=\emptyset$.\par

If $v=v_{2a}$, for $H^*$, then let $P_{v_{2a}}^1=\{v_{2a},v_{2a-2},\ldots, v_4,v_2,v_0,v_1,v_3,\ldots, v_{\ell-4},v_{\ell-2},v_{\ell-3},\ldots, v_{2a+2} \}$ and $P_{v_{2a}}^2=\{v_[2a],v_0,v_1,v_2,v_3,\ldots, v_{2a-1},v_{2a+1},\ldots, v_{\ell-4},v_{\ell-3},v_{\ell-2} \}$. Then all elements in $c(P_{v_{2a}}^1)$ are even except for $\{1,2\ell-5 \}$ and all elements in $c(P_{v_{2a}}^2)$ are odd except for $\{2a,4a \}$. So we have $c(H^*_{v_{2a}})\subseteq \{1,2\ell-5,2a,4a \}$. Change the positions of a few vertices in $P_{v_{2a}}^2$, we can get $P_{v_{2a}}^3=\{v_{2a},v_{2a-1},\ldots, v_3,v_1,v_2,v_0,v_{2a+2},v_{2a+1},v_{2a+3},v_{2a+4},\ldots, v_{\ell-5},v_{\ell-3},v_{\ell-4},v_{\ell-2} \}$. There is no $\{1,2a,4a,2\ell-5 \}$ in $c(P_{v_{2a}}^3)$. Thus $c(H^*_{v_{2a}})=\emptyset$.\par

If $v=v_{2a+1}$, for $H^*$,  then let $P_{v_{2a+1}}^1=\{v_{2a+1},v_{2a-1},\ldots, v_3,v_1,v_0,v_2,v_4,\ldots, v_{\ell-3},v_{\ell-2},v_{\ell-4},\ldots,$ $v_{2a+3} \}$ and $P_{v_{2a+1}}^2=\{v_{2a+1},v_1,v_0,v_2,v_3,\ldots, v_{2a},v_{2a+2},\ldots, v_{\ell-4},v_{\ell-3},v_{\ell-2} \}$. Then all elements in $c(P_{v_{2a+1}}^1)$ are even except for $\{1,2\ell-5 \}$ and all elements in $c(P_{v_{2a+1}}^2)$ are odd except for $\{2,2a+2,4a+2 \}$. So we have $c(H^*_{v_{2a+1}})\subseteq \{1,2\ell-5,2,2a+2,4a+2 \}$. Change the positions of a few vertices in $P_{v_{2a+1}}^2$, we can get $P_{v_{2a+1}}^3=\{v_{2a+1},v_{2a},\ldots, v_3,v_1,v_2,v_0,v_{2a+2},v_{2a+3},\ldots, v_{\ell-3},$ $v_{\ell-4},v_{\ell-2} \}$ and $P_{v_{2a+1}}^{21}=\{v_{2a+1},v_3,v_2,v_1,v_0,v_4,\ldots, v_{\ell-2} \}$. There is no $\{1,2,4a+2,2\ell-5 \}$ in $c(P_{v_{2a+1}}^3)$ and no $2a+2$ in $c(P_{v_{2a+1}}^{21})$. Thus $c(H^*_{v_{2a+1}})=\emptyset$.\par

If $v=v_{\ell-5}$, for $H^*$, then let $P_{v_{\ell-5}}^1=\{v_{\ell-5},v_{\ell-7},\ldots, v_4,v_2,v_0,v_1,v_3,\ldots, v_{\ell-4},v_{\ell-2},v_{\ell-3} \}$ and $P_{v_{\ell-5}}^2=\{v_{\ell-5},v_0,v_1,v_2,v_3,\ldots, v_{\ell-6},v_{\ell-4},v_{\ell-3},v_{\ell-2} \}$. Then all elements in $c(P_{v_{\ell-5}}^1)$ are even except for $\{1,2\ell-5 \}$ and all elements in $c(P_{v_{\ell-5}}^2)$ are odd except for $\{\ell-5,2\ell-10 \}$. So we have $c(H^*_{v_{\ell-5}})\subseteq \{1,2\ell-5,\ell-5,2\ell-10 \}$. Change the positions of a few vertices in $P_{v_{\ell-5}}^2$, we can get $P_{v_{\ell-5}}^3=\{v_{\ell-5},v_{\ell-6},\ldots, v_3,v_1,v_2,v_0,v_{\ell-3},v_{\ell-4},v_{\ell-2} \}$. There is no $\{1,2\ell-5,\ell-5,2\ell-10 \}$ in $c(P_{v_{\ell-5}}^3)$. Thus $c(H^*_{v_{\ell-5}})=\emptyset$.\par

If $v=v_{\ell-4}, \ \ell\geq 7$, for $H^*$, then let $P_{v_{\ell-4}}^1=\{v_{\ell-4},v_{\ell-6},\ldots, v_3,v_1,v_0,v_2,v_4,\ldots, v_{\ell-3},v_{\ell-2} \}$ and $P_{v_{\ell-4}}^2=\{v_{\ell-4},v_1,v_0,v_2,v_3,\ldots, v_{\ell-5},v_{\ell-3},v_{\ell-2} \}$. Then all elements in $c(P_{v_{\ell-4}}^1)$ are even except for $\{1,2\ell-5 \}$ and all elements in $c(P_{v_{\ell-4}}^2)$ are odd except for $\{2,\ell-3,2\ell-8 \}$. So we have $c(H^*_{v_{\ell-4}})\subseteq \{1,2\ell-5,2,\ell-3,2\ell-8 \}$. Change the positions of a few vertices in $P_{v_{\ell-4}}^2$, we can get $P_{v_{\ell-4}}^3=\{v_{\ell-4},v_{\ell-2},v_{\ell-3},v_0,v_2,v_1,v_3,\ldots, v_{\ell-5} \}$ and $P_{v_{\ell-4}}^{31}=\{v_{\ell-4},v_{\ell-2},v_{\ell-3},v_2,v_0,v_1,v_3,\ldots,$ $v_{\ell-5} \}$. There is no $\{1,2,2\ell-8 \}$ in $c(P_{v_{\ell-4}}^3)$ and no $\ell-3$ in $c(P_{v_{\ell-4}}^{31})$. Thus $c(H^*_{v_{\ell-4}})=\{2\ell-5 \}$.\par

If $v=v_{\ell-3}, \ \ell\geq 7$, for $H^*$, then let $P_{v_{\ell-3}}^1=\{v_{\ell-3},v_{\ell-5},\ldots, v_4,v_2,v_0,v_1,v_3,\ldots, v_{\ell-4},v_{\ell-2} \}$ and $P_{v_{\ell-3}}^2=\{v_{\ell-3},v_0,v_1,v_2,\ldots, v_{\ell-5},v_{\ell-4},v_{\ell-2} \}$. Then all elements in $c(P_{v_{\ell-3}}^1)$ are even except for $1$ and all elements in $c(P_{v_{\ell-3}}^2)$ are odd except for $\{\ell-3,2\ell-6 \}$. So we have $c(H^*_{v_{\ell-3}})\subseteq \{1,\ell-3,2\ell-6 \}$. Change the positions of a few vertices in $P_{v_{\ell-3}}^2$, we can get $P_{v_{\ell-3}}^{21}=\{v_{\ell-3},v_0,v_2,v_1,v_3,\ldots, v_{\ell-2} \}$ and $P_{v_{\ell-3}}^{22}=\{v_{\ell-3},v_2,v_0,v_1,v_3,v_4,\ldots, v_{\ell-2} \}$. There is no $1$ in $c(P_{v_{\ell-3}}^{21})$ and no $\ell-3$ in $c(P_{v_{\ell-3}}^{22})$. Thus $c(H^*_{v_{\ell-3}})=\{2\ell-6 \}$.\par

If $v=v_{\ell-2}$, for $H^*$, then let $P_{v_{\ell-2}}^1=\{v_{\ell-2},v_{\ell-3},v_{\ell-5},\ldots, v_4,v_2,v_0,v_1,v_3,\ldots, v_{\ell-4} \}$ and $P_{v_{\ell-2}}^2=\{v_{\ell-2},v_{\ell-3},v_0,v_1,v_2,\ldots, v_{\ell-5},v_{\ell-4} \}$. Then all elements in $c(P_{v_{\ell-2}}^1)$ are even except for $\{1,2\ell-5 \}$ and all elements in $c(P_{v_{\ell-2}}^2)$ are odd except for $\ell-3$. So we have $c(H^*_{v_{\ell-2}})\subseteq \{1,2\ell-5,\ell-3 \}$. Change the positions of a few vertices in $P_{v_{\ell-2}}^2$, we can get $P_{v_{\ell-2}}^{21}=\{v_{\ell-2},v_{\ell-3},v_0,v_2,v_1,v_3,\ldots,$ $v_{\ell-4} \}$ and $P_{v_{\ell-2}}^{22}=\{v_{\ell-2},v_{\ell-4},v_{\ell-3},v_2,v_0,v_1,v_3,\ldots, v_{\ell-5} \}$. There is no $1$ in $c(P_{v_{\ell-2}}^{21})$ and no $\{\ell-3,2\ell-5 \}$ in $c(P_{v_{\ell-2}}^{22})$. Thus $c(H^*_{v_{\ell-2}})=\emptyset$.\par

The proof is similar when $\ell$ is even.
\end{proof}

\begin{cor}\label{Cc1}
The graph $H$ is an $(\mathfrak{R}(P_\ell),t)$-saturated graph for $\ell\geq 7$.
\end{cor}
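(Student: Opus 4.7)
The plan is to check both requirements of the definition of $(\mathfrak{R}(P_\ell),t)$-saturation. The first---that $H$ contains no rainbow $P_\ell$---is precisely Lemma \ref{Ct1}. The second requires that for every non-edge $e$ of $H$ and every color $c\in[t]$, the graph $H+e$ with $c(e)=c$ contains a rainbow $P_\ell$.

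I would begin by enumerating the non-edges. Since $K_{\ell-2}$ on $\{v_0,\ldots,v_{\ell-3}\}$ is complete, every non-edge of $H$ is incident to $x$ or to $v_{\ell-2}$, so they split into two families: (a) $e=xv$ with $v\in\{v_0,\ldots,v_{\ell-5},v_{\ell-2}\}$; and (b) $e=v_{\ell-2}v_i$ with $i\in\{0,\ldots,\ell-5\}$. Crucially, in both families the endpoint that is neither $x$ nor $v_{\ell-2}$ has index in $\{0,\ldots,\ell-5\}$, and the vertex $v_{\ell-2}$ itself has index $\ell-2$; in particular every non-edge of $H$ is incident to some $v_j\in H^*$ with $j\notin\{\ell-4,\ell-3\}$, matching the hypothesis of Lemma \ref{Ct2}. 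Family (a) is then handled directly by applying Lemma \ref{Ct2} with $H^*=H\setminus x$: the other endpoint of $e$ lies in $H^*\setminus\{v_{\ell-4},v_{\ell-3}\}$, so the lemma provides a rainbow $P_\ell$ in $H^*+e\subseteq H+e$ for any color assigned to $e$.

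For family (b) I use a symmetry argument. Define $\sigma:V(H)\to V(H)$ that swaps $x\leftrightarrow v_{\ell-2}$ and fixes all other vertices, and $\tau:[t]\to[t]$ that transposes the colors $2\ell-5$ and $2\ell-6$ while fixing everything else. The pair $(\sigma,\tau)$ is an edge-colored automorphism of $H$: every edge of $K_{\ell-2}$ has color $i+j\le 2\ell-7$, which is fixed by $\tau$, and a direct inspection of the four edges $v_{\ell-2}v_{\ell-3},\,v_{\ell-2}v_{\ell-4},\,xv_{\ell-3},\,xv_{\ell-4}$ shows that $\sigma$ permutes them in exactly the way their colors are permuted by $\tau$. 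Under $(\sigma,\tau)$, the problem for $e=v_{\ell-2}v_i$ with color $c$ is converted to the problem for $xv_i$ with color $\tau(c)$, which falls in family (a); pulling back through the inverse of $(\sigma,\tau)$ yields the desired rainbow $P_\ell$ in $H+e$.

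The only delicate point I anticipate is the bookkeeping needed to ensure that the hypothesis $i\notin\{\ell-4,\ell-3\}$ of Lemma \ref{Ct2} holds for every non-edge, but this is automatic because $v_{\ell-4}$ and $v_{\ell-3}$ are precisely the two vertices of $K_{\ell-2}$ joined to \emph{both} $x$ and $v_{\ell-2}$, so every non-edge of $H$ is forced to have an endpoint outside $\{v_{\ell-4},v_{\ell-3}\}$. Beyond this, I foresee no real obstacle.
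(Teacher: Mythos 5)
Your proposal is correct and follows essentially the same route as the paper: non-edges meeting $x$ (or the non-edge $xv_{\ell-2}$) are handled directly by Lemma~\ref{Ct2}, and non-edges meeting $v_{\ell-2}$ are reduced to those via the color-swapping symmetry $x\leftrightarrow v_{\ell-2}$, which is exactly the observation $c(v_{\ell-2}v_{\ell-3})=c(xv_{\ell-4})$ and $c(v_{\ell-2}v_{\ell-4})=c(xv_{\ell-3})$ that the paper invokes. Your explicit formalization of this symmetry as a color-permuting automorphism $(\sigma,\tau)$ is a cleaner rendering of a step the paper leaves largely implicit, but it is the same argument.
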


\begin{proof}
By Lemma \ref{Ct2}, for any $e\in E(\bar{H})$, then $H+e$ contains a rainbow $P_\ell$ such that $c(e)$ is any color from $[t]$ and $e$ is incident to $v_i\in H^*$, where $i\notin \{\ell-4,\ell-3 \}$. By Definition \ref{Cd1}, $c(v_{\ell-2}v_{\ell-3})=2\ell-5=c(xv_{\ell-4})$ and $c(v_{\ell-2}v_{\ell-4})=2\ell-6=c(xv_{\ell-3})$. Then we have $c(H_x)=c(H_{v_{\ell-2}})=\emptyset$. \par
If $\ell\geq 7$, then $H+e$ contains a rainbow $P_\ell$ also holds for $e$ which is incident to $v_{\ell-4},v_{\ell-3}$ by Definition \ref{Cd1}.

Together with Lemma \ref{Ct1}, $H$ is $(\mathfrak{R}(P_\ell),t)$-saturated for $\ell\geq 7$.
\end{proof}

\begin{prop}\label{pro1}
For $\ell=5$, $G^*$ in Definition \ref{Cd1} is $(\mathfrak{R}(P_5),5)$-saturated and $|E(G^*)|=\frac{7n}{5}$.
\end{prop}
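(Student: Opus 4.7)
The plan is to verify the two saturation conditions for $G^* = kH$ (with $k \ge 2$) and then read off the edge count. The counts are immediate: $|V(G^*)| = 5k = n$ and $|E(G^*)| = 7k = \frac{7n}{5}$, since $H$ has $\binom{3}{2}+4 = 7$ edges. For rainbow $P_5$-freeness I would use that a copy of $P_5$ has five vertices and must therefore lie inside a single component of $G^*$, hence be a Hamilton path of some copy of $H$; by Lemma \ref{Ct1} no such Hamilton path is rainbow.

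To verify the saturation condition, I fix a non-edge $e$ of $G^*$ and an arbitrary color $c(e) \in [5]$, and split on the location of $e$. If $e$ lies inside one copy of $H$, then $e \in \{v_0v_3,\, v_0x,\, v_3x\}$, and I would handle all $3 \cdot 5 = 15$ resulting pairs $(e, c(e))$ by an explicit case check, in each case producing a concrete rainbow Hamiltonian ordering of the five vertices of $H+e$.

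If instead $e = uv$ joins two copies $H^{(1)}$ and $H^{(2)}$, then any rainbow $P_5$ containing $e$ must decompose as $P_1 + e + P_2$, where $P_i$ is a rainbow path of length $\ell_i$ in $H^{(i)}$ starting at the relevant endpoint of $e$ with $\ell_1 + \ell_2 = 3$ and with the colors of $P_1$, $P_2$, and $c(e)$ pairwise distinct. When at least one of $u, v$ lies in $\{v_0, v_3\}$ of its copy, Lemma \ref{Ct2} specialised to $\ell = 5$ (which gives $c(H^*_{v_0}) = c(H^*_{v_3}) = \emptyset$) provides a rainbow Hamilton path of the corresponding $H^*$ from that endpoint avoiding the prescribed color $c(e)$; concatenation with $e$ produces the desired rainbow $P_5$ via the split $(\ell_1,\ell_2) = (3,0)$ or $(0,3)$.

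The main obstacle is the remaining configuration in which both $u$ and $v$ lie in $\{v_1, v_2, x\}$ of their respective copies. A short enumeration shows that every length-$3$ rainbow path from $v_1$ in $H$ uses color $2$ and every length-$3$ rainbow path from $v_2$ uses color $1$, so the naive ``Hamilton-path + $e$'' split is unavailable for certain colors $c(e)$. I would address this by first tabulating the color sets of all rainbow paths of lengths $1, 2, 3$ starting at each of $v_1, v_2, x$ in $H$, and then, for each of the nine endpoint-type pairs $(u,v)$ and each $c(e) \in [5]$, selecting a mixed split $(\ell_1,\ell_2) \in \{(0,3),(1,2),(2,1),(3,0)\}$ together with compatible subpaths $P_1, P_2$ whose combined colors and $c(e)$ form four distinct elements of $[5]$. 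The rich palette of short rainbow paths based at $v_1, v_2, x$ guarantees that such a compatible choice always exists, which completes the proof.
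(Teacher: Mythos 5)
Your proposal is correct and follows essentially the same route as the paper's proof: both arguments reduce the cross-component case to the intersection of the color sets of short rainbow paths rooted at each vertex of $H$, observe that only $v_1$ (every such path forced to use color $2$) and $v_2$ (forced to use color $1$) are problematic, and then finish those endpoint pairs with an explicit mixed split, exactly as the paper does with its three exhibited witnesses such as the rainbow $P_5=\{v_2^1,v_3^1,v_1^1,v_1^2,v_2^2\}$ with colors $5,4,2,3$. The only difference is presentational: where you assert that ``the rich palette of short rainbow paths guarantees'' a compatible choice, the paper actually writes the witnesses down, and you should do the same rather than leave that finite check as an appeal to plausibility (the required paths do exist, e.g.\ via the $(2,1)$ splits the paper uses).
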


\begin{proof}
Similar as the proof in Lemma \ref{Ct2}, $c(H_{v_0})=\emptyset, \ c(H_{v_3})=c(H_x)=\emptyset$.

If $v=v_1$, then there are only $P_{v_1}^1=\{v_1,v_0,v_2,v_3 \}$ and $P_{v_1}^2=\{v_1,v_3,v_2,v_0 \}$ in $H^*$. Then $c(H^*_{v_1})=\{2,5 \}$.

If $v=v_2$, then there are only $P_{v_2}^1=\{v_2,v_0,v_1,v_3 \}$ and $P_{v_2}^2=\{v_2,v_3,v_1,v_0 \}$ in $H^*$. Then $c(H^*_{v_2})=\{1,4 \}$.

For $c(v_2 x)=4, \ c(v_1 x)=5$, we have $c(H_{v_1})=\{2 \}$ and $c(H_{v_2})=\{1 \}$.

For $H^1,H^2 \subset G^*$, let $e_1=v_1^1 v_1^2 \in E(\overline{G^*})$ and $c(e_1)=2$. We can get a rainbow $P_5=\{v_2^1,v_3^1,v_1^1,v_1^2,v_2^2 \}$ in $G^*+e_1$.

Let $e_2=v_2^1 v_2^2 \in E(\overline{G^*})$ and $c(e_2)=1$, we can get a rainbow $P_5=\{v_1^1,v_3^1,v_2^1,v_2^2,v_1^2 \}$ in $G^*+e_2$.

Let $e_3=v_1^1 v_2^2 \in E(\overline{G^*})$ and $c(e_3)\in \{1,2 \}$, we can get a rainbow $P_5=\{v_2^1,v_3^1,v_1^1,v_2^2,v_1^2 \}$ in $G^*+e_3$.

Therefore, $G^*$ is $(\mathfrak{R}(P_5),5)$-saturated and $|E(G^*)|=\frac{7n}{5}$.
\end{proof}

\begin{prop}\label{pro2}
For $\ell=6$, $G^*$ in Definition \ref{Cd1} is $(\mathfrak{R}(P_6),7)$-saturated and $|E(G^*)|=\frac{5n}{3}$.
\end{prop}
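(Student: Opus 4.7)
My plan is to parallel the proof of Proposition \ref{pro1} with $\ell = 6$.

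First, $G^* = kH$ is rainbow $P_6$-free: by Lemma \ref{Ct1} each copy of $H$ contains no rainbow $P_6$, and $G^*$ is a disjoint union, so every $P_6$ in $G^*$ lies in one copy.

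Next, I would enumerate rainbow Hamilton paths in the two natural $5$-vertex subgraphs $H^* = H \setminus x$ and $H^{**} = H \setminus v_4$. A direct check yields
\[
c(H^*_{v_0}) = c(H^*_{v_1}) = c(H^*_{v_4}) = \es,\ c(H^*_{v_2}) = \{1,7\},\ c(H^*_{v_3}) = \{1,6\},
\]
and by the symmetry of $H$ that swaps $v_4 \leftrightarrow x$ and colors $6 \leftrightarrow 7$,
\[
c(H^{**}_{v_0}) = c(H^{**}_{v_1}) = c(H^{**}_x) = \es,\ c(H^{**}_{v_2}) = \{1,6\},\ c(H^{**}_{v_3}) = \{1,7\}.
\]

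Then I would verify saturation edge by edge. The intra-copy missing edges are $v_0v_4, v_1v_4, v_0x, v_1x, v_4x$, and a short enumeration of Hamilton paths of $H+e$ through $e$ furnishes a rainbow one for every $c(e) \in [7]$. For an inter-copy edge $e = uu'$ with some endpoint (say $u$) in $\{v_0, v_1, v_4, x\}$, the emptiness of $c(H^*_u)$ or $c(H^{**}_u)$ yields a rainbow $P_5$ in $u$'s copy starting at $u$ and avoiding color $c(e)$; appending $u'$ via $e$ produces a rainbow $P_6$.

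The main obstacle is inter-copy $e = uu'$ with both $u, u' \in \{v_2, v_3\}$. For $c(e) \ne 1$ the $5+1$ strategy still works since $c(H^*_{v_i}) \cap c(H^{**}_{v_i}) = \{1\}$ for $i \in \{2,3\}$, so one can select $H^*$ or $H^{**}$ on the $u$-side to avoid color $c(e)$. For $c(e) = 1$, however, every rainbow $P_5$ from $v_2$ or $v_3$ in $H$ uses color $1$, so I would switch to a $3+3$ split: enumerate the short list of rainbow $P_3$'s from $v_2$ and $v_3$ in $H$, and pick two disjoint two-color sets $A, B \subseteq [7] \setminus \{1\}$, each realized by a $P_3$, then glue them across $e$ into a rainbow $P_6$. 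Explicit choices such as $A = \{2,3\}$ paired with $B = \{4,5\}, \{5,6\}$, or $\{6,7\}$ settle the three subcases $v_2^iv_2^j, v_2^iv_3^j, v_3^iv_3^j$ respectively.
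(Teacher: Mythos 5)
Your proposal is correct and takes essentially the same route as the paper's own proof: rainbow $P_6$-freeness from Lemma \ref{Ct1}, the same computation of the color intersections ($c(H^*_{v_2})=\{1,7\}$, $c(H^*_{v_3})=\{1,6\}$, empty for $v_0,v_1,v_4,x$, with the $v_4\leftrightarrow x$, $6\leftrightarrow 7$ symmetry giving the mirror values), a $5+1$ extension for every added edge except a color-$1$ edge joining $v_2/v_3$ vertices of different copies, and an explicit construction for that exceptional case. The only cosmetic difference is there: the paper glues $4+2$ (e.g.\ $v_0^1v_3^1v_4^1v_2^1v_2^2v_0^2$ with colors $3,7,6,1,2$) where you glue $3+3$; both work, and your treatment of the intra-copy edges is in fact slightly more explicit than the paper's.
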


\begin{proof}
Similar as in the proof of Lemma \ref{Ct2}, $c(H_{v_0})=\emptyset, \ c(H_{v_1})=\emptyset, \ c(H_{v_4})=c(H_x)=\emptyset$.\par
If $v=v_2$, then there are only $P_{v_2}^1=\{v_2,v_0,v_1,v_3,v_4 \}$, $P_{v_2}^2=\{v_2,v_4,v_3,v_0,v_1 \}$ and $P_{v_2}^3=\{v_2,v_4,v_3,v_1,v_0 \}$ in $H^*$. Then $c(H^*_{v_2})=\{1,7 \}$.

If $v=v_3$, then there are only $P_{v_3}^1=\{v_3,v_1,v_0,v_2,v_4 \}$, $P_{v_3}^2=\{v_3,v_4,v_2,v_1,v_0 \}$ and $P_{v_3}^3=\{v_3,v_4,v_2,v_0,v_1 \}$ in $H^*$. Then $c(H^*_{v_3})=\{1,6 \}$.

For $c(v_2 x)=7, \ c(v_3 x)=6$, we have $c(H_{v_2})=\{1 \}=c(H_{v_3})$.

For $H^1,H^2 \subset G^*$, let $e_1=v_2^1 v_2^1 \in E(\bar{G^*})$ and $c(e_1)=1$. We can get a rainbow $P_6=\{v_0^1,v_3^1,v_4^1,v_2^1,v_2^2,v_0^2 \}$ in $G^*+e_1$.

Let $e_2=v_3^1 v_3^2 \in E(\bar{G^*})$ and $c(e_2)=1$, we can get a rainbow $P_6=\{v_1^1,v_2^1,v_4^1,v_3^1,v_3^2,v_1^2 \}$ in $G^*+e_2$.

Let $e_3=v_2^1 v_3^2 \in E(\bar{G^*})$ and $c(e_3)=1$, we can get a rainbow $P_6=\{v_0^1,v_3^1,v_4^1,v_2^1,v_3^2,v_1^2 \}$ in $G^*+e_3$.

Therefore, $G^*$ is $(\mathfrak{R}(P_6),7)$-saturated and $|E(G^*)|=\frac{5n}{3}$.
\end{proof}

From Corollary \ref{Cc1}, Propositions \ref{pro1} and \ref{pro2}, we get the following theorem.
\begin{thm}
The graph $G^*$ is an $(\mathfrak{R}(P_\ell),t)$-saturated graph and $|E(G^*)|= \lceil \frac{n}{\ell} \rceil \cdot (\binom{\ell-2}{2}+4)$, where $t=2\ell-5$ and $\ell\ge 5$.
\end{thm}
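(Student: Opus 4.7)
The plan is to assemble Corollary \ref{Cc1} with Propositions \ref{pro1} and \ref{pro2} into a single statement covering all $\ell\ge 5$, together with a routine edge count. For the count, $H$ consists of a $K_{\ell-2}$ together with the four extra edges incident to $v_{\ell-2}$ and $x$, so $|E(H)|=\binom{\ell-2}{2}+4$, and $G^*=kH$ on $k\ell=n$ vertices has $k\bigl(\binom{\ell-2}{2}+4\bigr)=\lceil n/\ell\rceil\bigl(\binom{\ell-2}{2}+4\bigr)$ edges (padding with one final copy of $H$ if $\ell\nmid n$, consistent with the ceiling as in the convention used by Barrus et al.).

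For the saturation property I would split on $\ell$. The base cases $\ell=5$ and $\ell=6$ are immediate from Propositions \ref{pro1} and \ref{pro2}, which already treat $G^*=kH$ (including the inter-copy edges) explicitly. For $\ell\ge 7$, rainbow $P_\ell$-freeness of $G^*$ is clear because $P_\ell$ is connected and each component $H$ is rainbow $P_\ell$-free by Lemma \ref{Ct1}. Missing edges $e\in E(\overline{G^*})$ that lie inside a single copy of $H$ are handled by Corollary \ref{Cc1} directly, so the only new work concerns missing edges $e=uv$ whose endpoints lie in two distinct copies $H^1,H^2$ of $H$.

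For such an inter-copy edge $e$ I would use the Hamilton-path data assembled in Section 2: by Lemma \ref{Ct2}, for every $u\in H^*\setminus\{v_{\ell-4},v_{\ell-3}\}$ and every color $c\in[t]$ there is a rainbow Hamilton path of $H^*$ starting at $u$ and avoiding $c$; taking $c=c(e)$ and prepending $e$ to this path produces a rainbow path on $\ell+1$ vertices, i.e.\ a rainbow $P_\ell$ in $G^*+e$. For the remaining endpoint types $u\in\{x,v_{\ell-2},v_{\ell-4},v_{\ell-3}\}$ one needs instead a rainbow Hamilton path of the full graph $H$ avoiding $c(e)$, which is exactly what the second half of the proof of Corollary \ref{Cc1} supplies, using the color duplications $c(v_{\ell-2}v_{\ell-3})=c(xv_{\ell-4})=2\ell-5$ and $c(v_{\ell-2}v_{\ell-4})=c(xv_{\ell-3})=2\ell-6$ to reroute through $x$.

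The main obstacle is precisely this corner-vertex case for $\ell\ge 7$: Lemma \ref{Ct2} leaves the residual colors $\{2\ell-5\}\subseteq c(H^*_{v_{\ell-4}})$ and $\{2\ell-6\}\subseteq c(H^*_{v_{\ell-3}})$, so the vertex $x$ must be brought in to clear them. The key trick is that swapping the pair $\{v_{\ell-2}v_{\ell-3},\, v_{\ell-2}v_{\ell-4}\}$ for $\{xv_{\ell-4},\, xv_{\ell-3}\}$ in an appropriate Hamilton path of $H$ interchanges the roles of colors $2\ell-5$ and $2\ell-6$ while preserving the rest of the path, which lets one avoid any prescribed single color and completes the argument, yielding the theorem.
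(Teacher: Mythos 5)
Your proposal follows the paper's own route exactly: the theorem is obtained by assembling Corollary~\ref{Cc1} with Propositions~\ref{pro1} and~\ref{pro2} together with the count $|E(H)|=\binom{\ell-2}{2}+4$; in fact you are more careful than the paper, which for $\ell\ge 7$ passes silently from ``$H$ is saturated'' to ``$kH$ is saturated'' without discussing missing edges joining two copies, a step you treat explicitly via Lemma~\ref{Ct2}. Two local statements need repair, however. First, prepending $e$ to a rainbow Hamilton path of $H^*$ (which has $\ell-1$ vertices) yields a rainbow path on $\ell$ vertices, not $\ell+1$; this is still a $P_\ell$ in the sense the paper actually uses (a path on $\ell$ vertices, as forced by the proof of Lemma~\ref{Ct1}), so your conclusion stands, but the count as written is off by one. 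Second, for the corner vertices you ask for ``a rainbow Hamilton path of the full graph $H$ avoiding $c(e)$'' --- no such path can exist, since it would itself be a rainbow $P_\ell$ inside $H$, contradicting Lemma~\ref{Ct1}. What your swap $\{v_{\ell-2}v_{\ell-3},\,v_{\ell-2}v_{\ell-4}\}\leftrightarrow\{xv_{\ell-4},\,xv_{\ell-3}\}$ actually produces is a rainbow spanning path of $H\setminus v_{\ell-2}$ (an isomorphic copy of $H^*$ with the colors $2\ell-5$ and $2\ell-6$ interchanged), again on $\ell-1$ vertices, to which the external endpoint of $e$ is then prepended. With that correction your treatment of the residual colors at $v_{\ell-4}$, $v_{\ell-3}$ and $x$ goes through and is precisely the mechanism Corollary~\ref{Cc1} relies on implicitly.
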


\section{Proof of Theorem \ref{thm1}}

Let $G$ be an $(\mathfrak{R}(P_\ell),t)$-saturated graph with $n$ vertices. We present the proof by considering the following cases.

{\bf Case 1.} $\ell=5$.

If $t=5$ and $n \equiv 0\ (mod \ 5)$, then all components of $G$ are $H$ in Definition \ref{Cd1}, where $|H|=5$, i.e., $G=G^*$, which yields $|E(G)|=\frac{7n}{5}$.

If $t=6$ and $n \equiv 1\ (mod \ 5)$, then all components of $G$ are $H$ except for two rainbow triangles, which yields $|E(G)|=\frac{7n}{5}-\frac{12}{5}$.

If $t=5$ and $n \equiv 2\ (mod \ 5)$, then all components of $G$ are $H$ except for an edge $e$ that $c(e)=3$, which yields $|E(G)|=\frac{7n}{5}-\frac{9}{5}$.

If $t=5$ and $n \equiv 3\ (mod \ 5)$, then all components of $G$ are $H$ except for a rainbow triangle, which yields $|E(G)|=\frac{7n}{5}-\frac{6}{5}$.

If $t=5$ and $n \equiv 4\ (mod \ 5)$, then all components of $G$ are $H$ except for a rainbow $K_4$, which yields $|E(G)|=\frac{7n}{5}+\frac{2}{5}$.

Thus $sat_t(n,\mathfrak{R}(P_5))\le \lceil \frac{7n}{5} \rceil$.\par

{\bf Case 2.} $\ell=6$.

If $t=7$ and $n \equiv 0\ (mod \ 6)$, then all components of $G$ are $H$ in Definition \ref{Cd1}, where $|H|=6$, i.e., $G=G^*$, which yields  $|E(G)|=\frac{5n}{3}$.

If $t=7$ and $n \equiv 1\ (mod \ 6)$, then all components of $G$ are $H$ except for a rainbow triangle and a rainbow $K_4$, which yields  $|E(G)|=\frac{5n}{3}-\frac{8}{3}$.

If $t=7$ and $n \equiv 2\ (mod \ 6)$, then all components of $G$ are $H$ except for an edge $e$ that $c(e)=4$, then $|E(G)|=\frac{5n}{3}-\frac{7}{3}$.

If $t=7$ and $n \equiv 3\ (mod \ 6)$, then all components of $G$ are $H$ except for a rainbow triangle, which yields  $|E(G)|=\frac{5n}{3}-2$.

If $t=7$ and $n \equiv 4\ (mod \ 6)$, then all components of $G$ are $H$ except for a rainbow $K_4$, which yields  $|E(G)|=\frac{5n}{3}-\frac{2}{3}$.

If $t=7$ and $n \equiv 5\ (mod \ 6)$, then all components of $G$ are $H$ except for two rainbow $K_4$ and a rainbow triangle, which yields  $|E(G)|=\frac{5n}{3}-\frac{10}{3}$.

Thus $sat_t(n,\mathfrak{R}(P_6))\leq \lceil \frac{5n}{3} \rceil$.\par

{\bf Case 3.} $\ell\geq 7$.

If $t=2\ell-5$ and $n \equiv 0\ (mod \ \ell)$, then all components of $G$ are $H$ in Definition \ref{Cd1}, where $|H|=\ell$, i.e., $G=G^*$, which yields $|E(G)|=\frac{n}{\ell}\cdot (\binom{\ell-2}{2}+4)$.

If $t=2\ell-5$ and $n \equiv 1\ (mod \ \ell)$, then  all components of $G$ are $H$ except for a vertex $v$, which yields $|E(G)|=\frac{n-1}{\ell}\cdot (\binom{\ell-2}{2}+4)$.

If $t=2\ell-5$ and $n \equiv 2\ (mod \ \ell)$, then  all components of $G$ are $H$ except for an edge $e$, where $c(e)\in [t]$, which yields $|E(G)|=\frac{n-2}{\ell}\cdot (\binom{\ell-2}{2}+4)+1$.

If $t=2\ell-5$ and $n \equiv 3\ (mod \ \ell)$, then  all components of $G$ are $H$ except for a rainbow triangle, which yields $|E(G)|=\frac{n-3}{\ell}\cdot (\binom{\ell-2}{2}+4)+3$.

If $t=2\ell-5$ and $n \equiv 4\ (mod \ \ell)$, then all components of $G$ are $H$ except for a rainbow $K_4$, which yields $|E(G)|=\frac{n-4}{\ell}\cdot (\binom{\ell-2}{2}+4)+6$.

$\ldots$

If $t=2\ell-5$ and $n \equiv a\ (mod \ \ell)$, then  all components of $G$ are $H$ except for a rainbow $K_a$, which yields $|E(G)|=\frac{n-a}{\ell}\cdot (\binom{\ell-2}{2}+4)+\binom{a}{2}$. It is easy to check $e_a\le \lceil \frac{n}{\ell} \rceil \cdot (\binom{\ell-2}{2}+4) $,   since $1\le a \le \ell-1$ and then $1\le a \le \sqrt{(\ell-3)^2+8}$.

From all the above cases, we have for $\ell\ge 5$ and $t\ge 2\ell-5$, $sat_t(n,\mathfrak{R}(P_\ell))\le \lceil \frac{n}{\ell} \rceil \cdot \left(\binom{\ell-2}{2}+4\right)$. The proof is thus complete. \qed\\


\noindent{\bf Acknowledgements.} Shujuan Cao is partially supported by the National Natural Science Foundation of China (No. 11801412 ).

\end{document}